\newtheorem{theorem}{Theorem}
\newtheorem{proposition}[theorem]{Proposition}
\theoremstyle{definition}
\theoremstyle{remark}
\newtheorem{remark}[theorem]{Remark}
\numberwithin{equation}{section}
\newcommand{\Z}{\mathbb{Z}}
\newcommand{\R}{\mathbb{R}}
\newcommand{\dfn}[1]{\textit{#1}}
\newcommand{\lk}{\mathrm{lk}}
\begin{document}

\title{The complement problem for linklessly embeddable graphs}

\date{\today}

\author{Ramin Naimi}
\address{Occidental College, Los Angeles, 90041 CA}
\email{rnaimi@oxy.edu}

\author{Ryan Odeneal}
\address{University of Nevada, Reno, Reno, NV 89557}
\email{rodeneal@unr.edu}

\author{Andrei Pavelescu}
\address{University of South Alabama, Mobile, AL 36688}
\email{andreipavelescu@southalabama.edu}

\author{Elena Pavelescu}
\address{University of South Alabama, Mobile, AL 36688}
\email{elenapavelescu@southalabama.edu}

\maketitle

\begin{abstract}
We find all maximal linklessly embeddable graphs of order up to 11,
and verify that for every graph $G$ of order 11 
either $G$ or its complement $cG$ is intrinsically linked. 
We give an example of a  graph $G$ of order 11 such that both $G$ and $cG$ are $K_6$-minor free. 
We provide minimal order examples of maximal linklessly embeddable graphs 
that are not triangular or not 3-connected. 
We prove a Nordhaus-Gaddum type conjecture on the Colin de Verdi\`ere invariant
for graphs on at most 11 vertices.
We give a description of the programs used in the search.
\end{abstract}

\section{Introduction}

Throughout this article, we work with simple graphs.
We continue the study of graphs and their complements initiated nearly sixty years ago by Battle, Harary and Kodama~\cite{BHK}.
They established that the complement of a planar graph with nine vertices is necessarily non-planar.  
This result was independently proved by Tutte in \cite{Tutte}.
We express this by saying that the complete graph $K_9$ is not bi-planar.
In a series of articles, Harary and  Akiyama  investigated conditions under which both a graph $G$ and its complement $cG$ possess a specified property: have  connectivity one, have line-connectivity one, are 2-connected, are forests, are bipartite, are outerplanar, are Eulerian \cite{AH1}, have the same girth, have circumference 3 or 4 \cite{AH3}, have the same number of endpoints \cite{AH4}.
Given a property $\mathcal{P}$ for graphs,  
we say that the complete graph $K_n$ is \dfn{bi-$\mathcal{P}$} 
if there exists a subgraph $G$ of $K_n$ such that both $G$ and its complement $cG$
are $P$. 

Ichihara and Mattman \cite{IM} proved that the complement of an $n$-apex graph with $2n+9$ vertices is not $n$-apex, that is, $K_{2n+9}$ is not bi-$n$-apex.
In particular  $K_{11}$ is not bi-1-apex. 
A graph is \dfn{$n$-apex} if deleting some set of $n$ vertices yields a planar graph.
A 1-apex graph is usually called apex.

This article is about the property of linkless embedability.
A graph is \dfn{intrinsically linked} (\dfn{IL}) if every embedding of it in $\R^3$ (or $S^3$) contains a nonsplittable 2-component link.
A graph is \dfn{linklessly embeddable} if it is not intrinsically linked (\dfn{nIL}).
The combined work of Conway and Gordon \cite{CG}, Sachs \cite{Sa}, and Robertson, Seymour and Thomas \cite{RST} fully characterize IL graphs: a graph is IL if and only if it contains a graph in the Petersen family as a minor.
The Petersen family consists of seven graphs obtained from $K_6$ by $\nabla Y-$moves and $Y\nabla-$moves, as presented in Figure \ref{fig-ty}.

\begin{figure}[htpb!]
\begin{center}
\begin{picture}(160, 50)
\put(0,0){\includegraphics[width=2.4in]{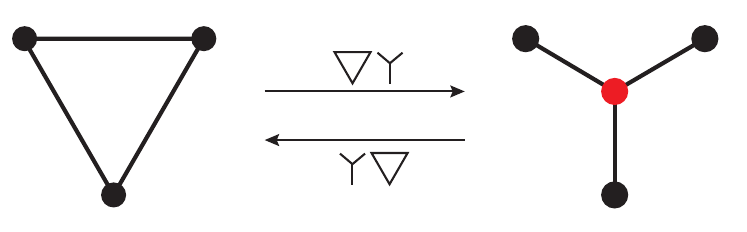}}
\end{picture}
\caption{\small  $\nabla Y-$ and $Y\nabla-$moves} 
\label{fig-ty}
\end{center}
\end{figure}

We ask: what is the smallest  $N$ such that the complete graph $K_N$ is not  bi-nIL?
It is easy to prove that $N \le 15$:
Mader \cite{Ma} showed that a graph on $n\ge 6$ vertices and at least  $4n-9$ edges contains a $K_6$ minor.
Since, for $n \ge 15$, $K_n$ has  ${n}\choose{2}$ $\ge  2(4n-9)$ edges,
for any graph $G$ with at least 15 vertices, 
either $G$ or $cG$ contains a $K_6$ minor and hence is IL.
In  \cite{PP2}, the last two authors proved $K_{10}$ is bi-apex and therefore bi-nIL (by \cite{Sa}), and that $K_{13}$ is not bi-nIL, thereby showing that $11\le N \le 13$,
for if $K_n$ is not bi-nIL, then neither is $K_{n+1}$.
In this paper we show that $N=11$.
Then we prove:
\begin{theorem}
If $G$ is a linklessly embeddable graph with at least 11 vertices, then the complement of $G$ is intrinsically linked.
\label{main}
\end{theorem}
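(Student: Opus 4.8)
The plan is to reduce Theorem~\ref{main} to a finite check on graphs with exactly $11$ vertices, and then to settle that check computationally. First I would record two monotonicity facts implied by the Robertson--Seymour--Thomas characterization \cite{RST}: being nIL is preserved under taking subgraphs (and minors), while being IL is preserved under adding edges or vertices. Now let $G$ be nIL with $n \ge 11$ vertices and choose any $11$-element vertex set $S$. The induced subgraph $H = G[S]$ is nIL, and for $u,v \in S$ the pair $uv$ is a non-edge of $H$ exactly when it is an edge of $cG$; hence the complement of $H$ taken within $S$ is the induced subgraph $cG[S]$, and in particular a subgraph of $cG$. Consequently, once one knows that the complement of every nIL graph on exactly $11$ vertices is IL, it follows that $cH$ is IL and therefore its supergraph $cG$ is IL. This reduces the theorem to the assertion that no graph on $11$ vertices has both $G$ and $cG$ nIL.

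Second, I would reduce to edge-maximal nIL graphs. Every nIL graph $G$ on $11$ vertices is a spanning subgraph of some nIL graph $M$ that is maximal with respect to edge addition (add edges greedily while the graph stays nIL). Since $G \subseteq M$ forces $cM \subseteq cG$, and IL is inherited by supergraphs, it suffices to prove that $cM$ is IL for every edge-maximal nIL graph $M$ on $11$ vertices. These are precisely the critical cases: by the Mader bound quoted in the introduction an nIL graph on $11$ vertices has at most $4\cdot 11 - 10 = 34$ edges, so $cM$ has at least $55 - 34 = 21$ edges and is as sparse as a complement can be, which is exactly when forcing a Petersen family minor is most delicate. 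I emphasize that all edge-maximal nIL graphs must be retained, including those with fewer than $34$ edges, since such a graph is a subgraph of no denser nIL graph and so cannot be handled by a denser representative.

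The residual problem is finite: enumerate every edge-maximal nIL graph on $11$ vertices up to isomorphism and, for each, decide whether its complement contains one of the seven Petersen family graphs as a minor. I would generate the candidates by extending the edge-maximal nIL graphs of smaller order (determined at orders up to $10$)---adjoining an eleventh vertex with all admissible neighbourhoods, discarding IL results, and re-maximalizing---while reducing modulo isomorphism at each stage; the edge bound $|E(M)| \le 34$ together with standard degree constraints keeps the branching finite. The main obstacle is not any single minor test, which is routine, but certifying \emph{completeness} of the enumeration: one must prove that this construction yields \emph{every} edge-maximal nIL graph on $11$ vertices, not merely a large sample, since the full space of $11$-vertex graphs is far too large to scan exhaustively. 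Once the list is proved complete, confirming that each of the finitely many complements is IL is a mechanical verification, and Theorem~\ref{main} follows.
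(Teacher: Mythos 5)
Your two reductions---first to induced subgraphs on exactly 11 vertices, then to edge-maximal nIL (maxnIL) graphs---are correct and are exactly the reductions the paper makes. The genuine gap is in the step you yourself flag as ``the main obstacle'': your proposed enumeration of the order-11 maxnIL graphs by adjoining an eleventh vertex to the maxnIL graphs of smaller order is not complete, and you offer no argument to make it so. If $M$ is maxnIL on 11 vertices and $v\in V(M)$, then $M\setminus v$ is nIL but need not be maxnIL on 10 vertices; it is only a (possibly proper) spanning subgraph of some maxnIL graph $M'$, and adjoining $v$ with the same neighbourhood to $M'$ may well produce an IL graph, which your procedure discards---so $M$ is never generated. (Concretely: for a maxnIL apex graph obtained by coning a vertex $v$ over a maximal planar graph $T$ of order 10, deleting a vertex $w \ne v$ leaves a cone over a planar graph that is generally no longer maximal planar, hence a nIL graph that is not maxnIL; the same phenomenon obstructs your induction in general.) Your procedure recovers $M$ only when some vertex-deleted subgraph $M\setminus v$ happens to be maxnIL, and nothing you cite guarantees that.

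The paper closes this gap by splitting into cases and using a different generation strategy for each. First, apex maxnIL graphs are characterized exactly as cones over maximal planar graphs (Proposition \ref{prop-maxnil-apex}), so they are read off from the known list of 233 plane triangulations on 10 vertices. Second, for maxnIL graphs with a degree-3 vertex $v$, a result of \cite{NPP} guarantees that $v$ lies in a $K_4$ and that $G\setminus v$ \emph{is} maxnIL; only on this subclass is your extension strategy valid, and the paper applies it there. Third, the remaining graphs are shown to be 3-connected (Proposition \ref{3-connected}, after verifying all smaller maxnIL graphs are triangular) with every degree between 4 and 9; contrary to your assertion that the space of 11-vertex graphs is ``far too large to scan exhaustively,'' the paper brute-forces the resulting 158{,}056{,}639 two-connected candidates with Nauty, a $K_6$-minor filter, and a $\Z_2$ linking-number test. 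So the theorem does reduce to a finite verification as you describe, but the completeness certificate you correctly identify as the crux is supplied by structural propositions plus exhaustive degree-constrained generation, not by one-vertex extension of smaller maxnIL graphs.
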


 Kotlov, Lov\'asz and Vempala \cite{KLV} conjectured that
 for any graph of order $n$, $\mu(G)+\mu(cG)\ge n-2$,
 where $\mu$ denotes the de Verdi\`ere invariant of a graph.
 We use Theorem \ref{main}  in the next section to prove this conjecture for graphs of order 11.

A nIL graph is \textit{maxnIL} if it is not a proper subgraph of a nIL graph of the same order.
To prove Theorem~\ref{main}, it is enough to check the statement for maxnIL graphs.
This is because if $G$ is a subgraph of a nIL graph $G'$ of the same order,  and $cG'$ is IL, then $cG$ is also IL, as it contains $cG'$ as a subgraph.
To this end, using two computer algorithms that will be detailed in Section \ref{CS},  we found a complete list of maxnIL graphs of order up to 11. 
This list is available in \cite{ONPParxiv}.
For each maxnIL graph of order 11, its complement was found to be IL.
Techniques and considerations used to narrow down the computer search are presented in Section \ref{maxn}.\\


\section{maxnIL graphs of order up to 11}
\label{maxn}

By Mader \cite{Ma}, any graph of order 11 and size greater than 34 contains a $K_6$ minor, so it is IL.
On the other hand, by work of Aires \cite{A}, we know that a maxnIL graph on 11 vertices has at least 22 edges.
We therefore narrowed our search to graphs on 11 vertices and $22\le m\le 34$ edges. 
Similar bounds on the number of edges were set for $n=7,8,9,10$
(in order to find all maxnIL graphs on 11 vertices, 
we first needed to find all maxnIL graphs on 10 or fewer vertices,
as explained further below).

By  \cite{NPP}, maxnIL graphs are 2-connected; so the search was restricted to graphs with connectivity at least 2.
We further narrowed the search space by proving a set of results on maxnIL graphs,
which we describe below.\\

To \textit{cone} a vertex over a graph $H$ means we add a new vertex  $v$ and connect $v$ to every vertex of $H$.
Sachs \cite{Sa} showed that a graph $H$ is planar if and only if coning one vertex over $H$ yields a nIL graph.
It follows that
if a nIL graph $G$ of order $n$ has a vertex $v$ of degree $n-1$, 
then $G\setminus v$ is planar. 
We use this to show:

\begin{proposition}
\label{prop-maxnil-apex}
Let $G$ be a maxnIL graph of order $n$.
Then the following are equivalent: 
(i)~$G$ is apex;
(ii)~$G$ has a vertex of degree $n-1$;
(iii)~$G$ is a cone on a maximal planar graph of order $n-1$.
\end{proposition}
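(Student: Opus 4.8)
The plan is to establish the cyclic chain of implications (iii) $\Rightarrow$ (ii) $\Rightarrow$ (i) $\Rightarrow$ (iii), using throughout Sachs's characterization recalled just above (a graph $H$ is planar if and only if coning a vertex over $H$ yields a nIL graph) together with the edge-maximality built into the definition of a maxnIL graph.

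Two of the three implications are immediate. For (iii) $\Rightarrow$ (ii): if $G$ is the cone on a graph of order $n-1$, the cone vertex is joined to all other $n-1$ vertices and so has degree $n-1$. For (ii) $\Rightarrow$ (i): if $G$ has a vertex $v$ of degree $n-1$, then, since $G$ is nIL, the consequence of Sachs's theorem recorded just before the proposition gives that $G\setminus v$ is planar, so $G$ is apex by definition.

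The substantive step is (i) $\Rightarrow$ (iii), where the maxnIL hypothesis carries the argument. Suppose $G\setminus w$ is planar for some vertex $w$, and set $H=G\setminus w$. I would first enlarge $G$ to the cone $G'$ obtained by joining $w$ to every vertex of $H$; then $G\subseteq G'$, the two graphs share the same vertex set, and since $H$ is planar, Sachs's theorem shows $G'$ is nIL. Because $G$ is maxnIL and $G'$ is a nIL graph of the same order containing $G$, maximality forces $G=G'$; hence $w$ has degree $n-1$ and $G$ is the cone on $H$. To see that $H$ is in fact \emph{maximal} planar, suppose an edge $e$ (necessarily joining two vertices of $H$, hence not in $G$) could be added with $H+e$ still planar. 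Then coning over $H+e$ is nIL by Sachs, and this cone is exactly $G+e$, a nIL graph of order $n$ properly containing $G$ --- contradicting maximality. Thus no such $e$ exists, $H$ is maximal planar, and (iii) holds.

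The only delicate point, and the main thing to get right, is that both applications of maximality are made to graphs that share the vertex set of $G$ and differ from it only in edges: coning $w$ over $H$ and adjoining the edge $e$ each leave the vertex set of $G$ unchanged, so the maxnIL property (edge-maximality \emph{at fixed order}) applies verbatim. I expect this bookkeeping to be the crux, since the entire argument hinges on reading edge-maximality at fixed order correctly into Sachs's equivalence.
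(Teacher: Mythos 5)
Your proof is correct and uses the same essential ingredients as the paper's: Sachs's cone characterization of planarity plus edge-maximality at fixed order, with the only difference being that you run the cycle of implications in the opposite direction and fold the ``degree $n-1$'' and ``maximal planar'' conclusions into the single step (i)~$\Rightarrow$~(iii) rather than splitting them across (i)~$\Rightarrow$~(ii) and (ii)~$\Rightarrow$~(iii). The bookkeeping point you flag --- that both maximality applications compare graphs on the same vertex set --- is handled correctly and matches the paper's (implicit) use of the same fact.
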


\begin{proof} 
(i) $\to$ (ii):
Suppose $G$ is apex.
Then for some vertex $v \in G$, $G\setminus v$ is planar.
If $\deg(v) < n-1$, then an edge $e$ incident to $v$ can be added to $G$ 
such that $G+e$ is nIL, contradicting the maximality of $G$.
(ii) $\to$ (iii):
If a vertex $v$ has degree $n-1$, 
then, by \cite{Sa}, $G\setminus v$ is planar.
It must be maximal planar since otherwise
an edge $e$ can be added to  $G\setminus v$ while preserving its planarity,
so that $G+e$ is nIL, again a contradiction.
(iii) $\to$ (i): This follows from the definition of apex.
\end{proof}
 
This means that each plane triangulation of order $n\ge 5$ gives rise to a unique  maxnIL graph of order $n+1$. 
There is only one plane triangulation with 5 vertices; and, by work of Bowen and Fisk \cite{BF}, the plane triangulations with $6\le n\le 10$ vertices are also known, and their numbers are: $T_6=2, T_7=5, T_8=14, T_9=50, T_{10}=233$.
The number of maxnIL apex graphs is presented in Table 1. 

An edge $e$ of a graph $G$ is \textit{triangular} if $e$ is in some 3-cycle in $G$.
A graph is \textit{triangular}  if every edge of it is triangular.
A graph $G$ is the \dfn{clique sum} of two graphs $G_1$ and $G_2$ over $K_p$ if $V(G)=V(G_1)\cup V(G_2)$, $E(G)=E(G_1)\cup E(G_2)$, and the subgraphs induced by $V(G_1)\cap V(G_2)$ in $G_1$ and $G_2$ are   both complete of order $p$. 
Clique sum constructions are often used to create larger order graphs with a certain property (e.g., linklessly embeddable, or $K_6$-minor-free) from two graphs with the same property.

\begin{proposition}
If every maxnIL graph of order at least 3 and at most $n$ is triangular, then every maxnIL graph $G$ of order $n+1$ is 3-connected. 
\label{3-connected}
\end{proposition}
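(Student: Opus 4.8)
The plan is to argue by contradiction, converting the failure of $3$-connectivity into a $2$-cut that, under the triangularity hypothesis, can be exploited to add an edge while staying nIL. By \cite{NPP} the maxnIL graph $G$ is $2$-connected, so if it is not $3$-connected it has a $2$-cut $\{u,v\}$. Write $G=G_1\cup G_2$, where $G_1,G_2$ are the subgraphs induced on $\{u,v\}$ together with the vertices of one ``side'' of the cut. Since each side omits at least one vertex of the other, both $G_1$ and $G_2$ have order in $[3,n]$, and each, being a subgraph of $G$, is nIL. Throughout I would lean on the closure property indicated in the remark following the clique-sum definition (classically valid for $p\le 3$): a clique sum over $K_p$ with $p\le 3$ of two nIL graphs is again nIL.

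First I would show the cut is actually an edge. If $uv\notin E(G)$, then $G_1+uv$ and $G_2+uv$ are each minors of $G$ — contract a path from $u$ to $v$ running through the opposite side, which exists by $2$-connectivity — and hence are nIL; then $G+uv=(G_1+uv)\cup_{K_2}(G_2+uv)$ is a clique sum over $K_2$ of nIL graphs, so it is nIL, contradicting the maximality of $G$. Thus $uv\in E(G)$, and $G=G_1\cup_{K_2}G_2$ is a genuine clique sum over the edge $uv$.

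Next I would promote the two sides to maxnIL graphs. Extend each $G_i$ to a maxnIL graph $G_i^*$ on the same vertex set (still containing $uv$); then $G_1^*\cup_{K_2}G_2^*$ is nIL and contains $G$, so maximality forces $G=G_1^*\cup_{K_2}G_2^*$, and since a clique sum only unions edge sets this gives $G_i^*=G_i$. Hence $G_1$ and $G_2$ are themselves maxnIL of order in $[3,n]$, so by hypothesis both are triangular. Applying triangularity to the edge $uv$ inside each side yields a vertex $w_1$ on the $G_1$-side and a vertex $w_2$ on the $G_2$-side, each adjacent to both $u$ and $v$; note $w_1\ne w_2$ and $w_1w_2\notin E(G)$, since there are no edges across the cut.

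The crux is then to add the crossing edge $w_1w_2$ and show the result is still nIL, contradicting maximality. I would verify that $\{u,v,w_1\}$ separates $G+w_1w_2$ and realizes it as the clique sum over the triangle $\{u,v,w_1\}$ of $G_1$ and of $G_2+w_1$, where in the second summand $w_1$ is coned over the triangle $\{u,v,w_2\}$ (available precisely because $G_2$ is triangular). The first summand $G_1$ is nIL; the second is a clique sum over $K_3$ of $G_2$ with a $K_4$, hence nIL. Therefore $G+w_1w_2$ is a clique sum over $K_3$ of two nIL graphs and is nIL, the desired contradiction. The main obstacle is exactly this last step: it is the triangularity of \emph{both} sides that lets one reroute the new edge $w_1w_2$ through the triangle $\{u,v,w_1\}$, upgrading a $2$-clique-sum-plus-crossing-edge into a legitimate $3$-clique-sum — without triangularity the coning vertex $w_2$ need not exist and the argument collapses.
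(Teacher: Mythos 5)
Your argument is correct, but it takes a more self-contained route than the paper. The paper's proof is a two-line reduction: it cites \cite{NPP} for the structure theorem that a maxnIL graph which fails to be 3-connected is a clique sum of two maxnIL graphs of smaller order over an edge that is \emph{non-triangular} in at least one of them, and the hypothesis then gives an immediate contradiction. What you have done is essentially reprove that structure theorem from more primitive ingredients: you use only the 2-connectivity of maxnIL graphs from \cite{NPP}, plus the closure of linkless embeddability under clique sums over cliques of size at most 3, and then your steps (forcing the cut pair to be an edge, promoting both sides to maxnIL graphs, and rerouting the crossing edge $w_1w_2$ through the triangle $\{u,v,w_1\}$) reconstruct exactly the mechanism behind the cited result --- in particular your final step is the precise reason the shared edge must be non-triangular on at least one side. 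The trade-off: the paper's version is shorter and leans on prior work, while yours makes transparent where triangularity of \emph{both} summands is used. The one point you should shore up is the load-bearing external fact that a clique sum of two nIL graphs over $K_p$, $p\le 3$, is nIL; the paper's remark following the clique-sum definition does not assert this, so you need an explicit citation (it follows from Robertson--Seymour--Thomas's work on flat embeddings, by gluing the two flat embeddings along the disk bounded by the shared triangle). With that reference in place, your proof is complete and correct.
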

\begin{proof}
Assume $G$ is a maxnIL graph of order $n+1$ that is not 3-connected.
By \cite{NPP}, $G$ is a clique sum of two maxnIL graphs of order at most $n$
over an edge which is non-triangular in at least one of the two graphs. 
This contradicts the assumption that every maxnIL graph of order at most $n$ is triangular.
\end{proof}

The only maxnIL graphs of order $3 \le n \le 5$ are $K_n$.
And $K_6^-$ ($K_6$ minus an edge) is the only maxnIL graph of order 6.
These graphs are all triangular.
It follows by Proposition \ref{3-connected} that every maxnIL graph of order 7 is 3-connected and hence has minimal degree at least 3.
For each $n=7, 8, 9, 10$, we used the Nauty program designed by McKay~\cite{MP} to generate the 2-connected  graphs of minimal degree 3 with $n$ vertices and  $2n\le m\le 4n-10$ edges.
 We then used the Mathematica program detailed in Section \ref{CS} 
 to find the maxnIL graphs among them and to confirm they were all triangular. 
 The search considered a total of 5,065,328 graphs. 
By Proposition \ref{3-connected} , this implies that all maxnIL graphs of order 11 are 3-connected.
(However, there exists a maxnIL graph of order 11 which is not triangular.
 See Figure \ref{1127})

As our next step in finding all maxnIL graphs of order 11, 
we first focused on those with a vertex of degree 3. 
By \cite{NPP}, if a vertex $v$ of a maxnIL graph $G$  has degree 3, 
then $v$ belongs to a $K_4$ subgraph of $G$ and $G\setminus v$ is maxnIL. 
Having found all maxnIL graphs of order 10 (there are 107 of them), 
we constructed, from each such graph $G$, graphs of order 11 by
adding a new vertex and connecting it to three vertices which form a triangle in $G$.
We obtained 1963 graphs this way, of which 159 were maxnIL.
Since plane triangulations have minimal degree 3, 
by Proposition \ref{prop-maxnil-apex} maxnIL apex graphs have minimal degree 4 for $n\ge 5$.
Therefore this set of 159 maxnIL graphs consists of non-apex graphs only
and is disjoint from the set of 233 maxnIL apex graphs found above.

The search for all maxnIL graphs of order 11 was now reduced to considering 3-connected graphs
with minimum degree $\ge 4$.
Furthermore, since we had already found all maxnIL graphs of order 11 that are apex,
we now only had to search for non-apex maxnIL graphs of order 11;
by Proposition \ref{prop-maxnil-apex},  
these graphs all have maximum degree $\le 9$.

Unfortunately, (to our knowledge) Nauty does not accept connectivity 3 as a parameter,
so we had to consider all 2-connected graphs of order 11 with vertex degrees between 4 and 9.
There are 158,056,639 such graphs, which we saved in a .g6 master file.
We used the Alabama Super Computer (ASC) and the Python code detailed in Section \ref{CS} to sift out the graphs which had $K_6$ minors. 
To do this, we split the master file into subfiles with approximately 1,000,000 graphs each, and then each subfile was tested on 16 cores for up to 150 hours, with at most 15 jobs running at a time. 
Due to the nature of the algorithm, the larger graphs (size 34 for instance) ran faster, since they were more likely to contain a $K_6$ minor and be dismissed by the program. 
Some instances for size 27 timed out, so we had to run them again on 32 cores with more time allotment. This process took roughly 3 months.

Once a job would finish and a $K_6$-minor free list of graphs was produced, the list would be portioned into batches of 300,000 graphs and the Mathematica program for finding maxnIL graphs would run on up to 4 Intel Core i5-6500 @ 3.20GHz processors at a time, at a rate of roughly 120,000 graphs per day. 
This time, the jobs involving smaller size graphs would finish first, since less cycles need to be considered. 

The search yielded 318 order 11, 3-connected maxnIL graphs with vertex degrees between 4 and 9. 
All of them are necessarily non-apex (as explained above).
Together with the 159 maxnIL graphs with minimum vertex degree 3 and the 233 maxnIL apex graphs, the search produced all the 710 maxnIL graphs of order 11. 
The Mathematica program checked all their complements  to be IL.
This proves Theorem~\ref{main},
as any nIL graph of order greater than 11 contains a nIL induced subgraph of order 11.

Table~1 shows the number of maxnIL graphs for each order of at most 11.
For order 11, the search found maxnIL graphs with $m$ edges for every $27 \le m\le 34$,
and none for any other values of $m$.

\begin{table}[htpb!]
\label{TableMN}
\centering
\begin{tabular}{|c|c|c|c|c|c|c|}
\hline
   order $n$& 6 & 7 & 8 & 9 & 10 & 11   \\ \hline
  $1-$apex maxnIL graphs   & 1 & 2 & 5 & 14 & 50 & 233   \\ \hline
      all maxnIL graphs   & 1 & 2 & 6 & 20 & 107 & 710  \\ \hline
\end{tabular}\\\vspace*{0.2in}
    \caption{ maxnIL graphs with $6\le n\le 11$ vertices.}
\end{table}

\begin{remark} The graph $G_{11,27}$, depicted in  Figure \ref{1127}, has some unique properties among the maxnIL graphs of order 11. It is the only graph in that list which has 27 edges. It is also the only maxnIL graph of order 11 which is not triangular. In this graph, the edges $v_1v_8$ and $v_2v_{10}$ are non-triangular. Given that all the maxnIL graphs of order at most 10 are all triangular, $G_{11,27}$ is the minimal order example for non-triangular maxIL graphs.
\end{remark}


\begin{figure}[htpb!]
\begin{center}
\begin{picture}(250, 200)
\put(0,0){\includegraphics[width=3in]{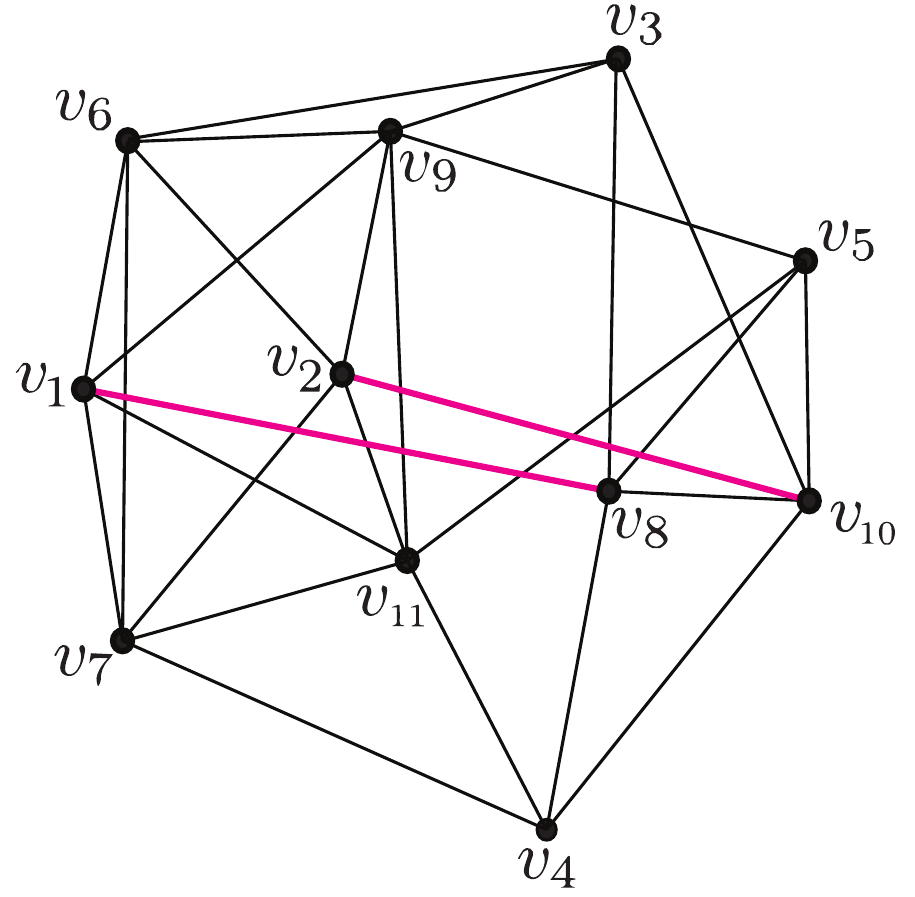}}
\end{picture}
\caption{\small MaxnIL graph with $n=11$ vertices and $m=27$ edges. Edges $v_1v_8$ and $v_2v_{10}$ are non-triangular.}
\label{1127}
\end{center}
\end{figure}

\begin{remark}
Given that all maxnIL graphs of order $\le 11$ are 3-connected,
it is natural to ask whether every maxnIL graph (of any order) is 3-connected.
We see as follows that the answer is negative.
Starting with $G_{11,27}$, adding a new vertex and connecting it to the endpoints of one of its non-triangular edges, yields a nIL graph of order 12 that is not 3-connected.
This graph is maxnIL, since it is the clique sum of two maxnIL graphs over an edge which is non-triangular in one of them \cite{NPP}. This a is a minimal order maxnIL graph with vertex connectivity 2.

\end{remark}

\begin{remark}
In light of Theorem~\ref{main},
one may wonder whether for every graph $G$ of order 11 either $G$ or $cG$ has a $K_6$ minor.
The graph in Figure \ref{1121}(a) is a maxnIL graph of order 11 since it is a cone over a maximal planar graph. 
In particular, it has no $K_6$ minor.
It turns out that the complement of this graph has no $K_6$ minor either.
Thus $K_{11}$ is ``bi-$K_6$-minor-free.''
We leave it as an open question whether $K_{12}$ is bi-$K_6$-minor-free.
%


\end{remark}


\begin{figure}[htpb!]
\begin{center}
\begin{picture}(370, 170)
\put(0,0){\includegraphics[width=5in]{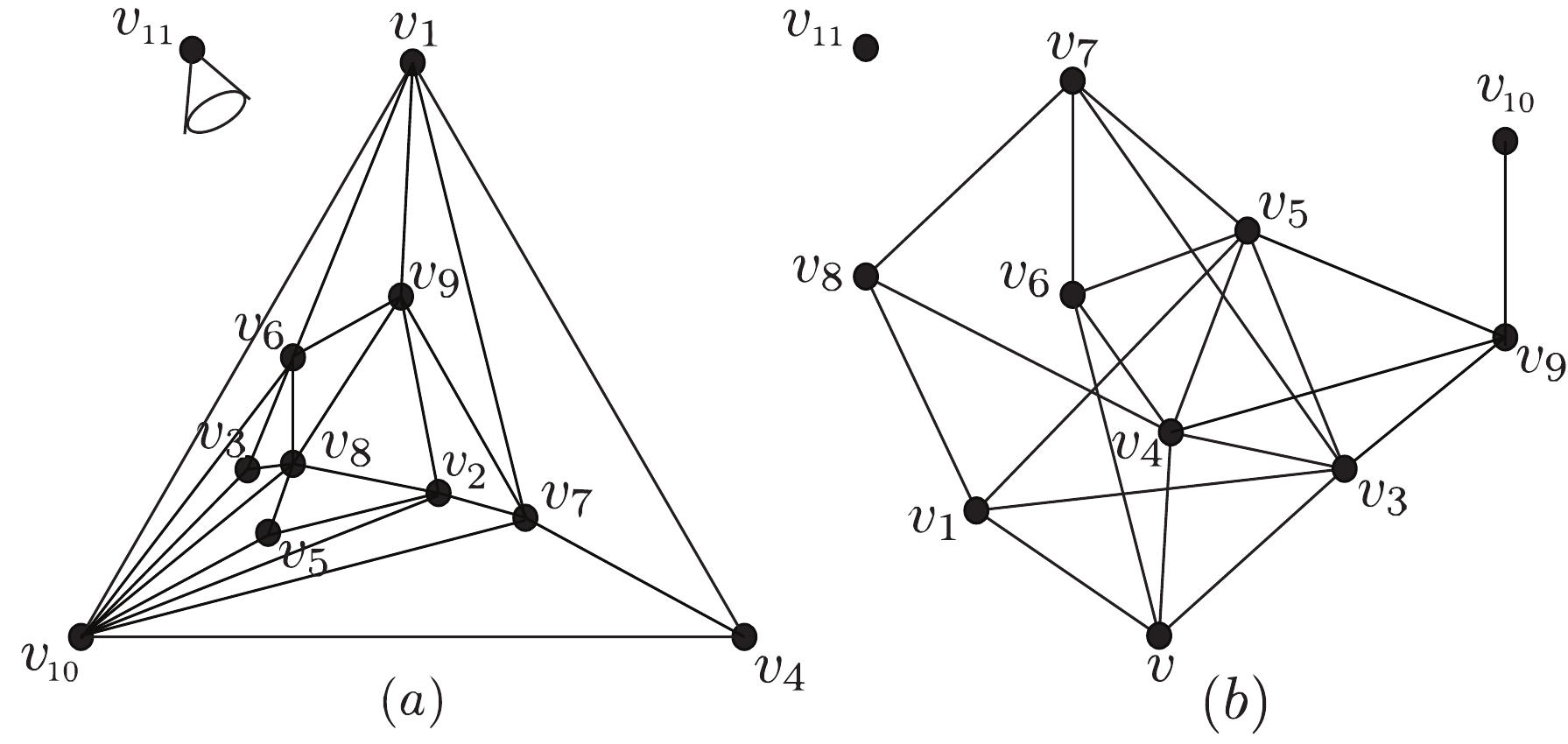}}
\end{picture}
\caption{\small (a) MaxnIL apex graph $G$. Vertex $v_{11}$ connects with all other ten vertices. (b) The complement of $G$.}
\label{1121}
\end{center}
\end{figure}

\begin{remark} The search found all maxnIL graphs of order at most 11 to be 2-apex.  
While there are known maxnIL graphs of order 13 which are not 2-apex (such as Maharry's $Q_{13,3}$ graph \cite{M}), it is still an open question whether all maxnIL graphs of order 12 are 2-apex.
\end{remark}

\begin{remark} In 1990, Colin de Verdi\'ere \cite{dV}  introduced the graph invariant $\mu$, based on spectral properties of matrices associated with a graph $G$. He showed that $\mu$  is monotone under taking minors
and that planarity is characterized by the inequality $\mu\le 3$.  Lov\'asz and Schrijver
showed that linkless embeddability is characterized by the inequality $\mu\le 4$ \cite{LS}.
By reformulating the definition of $\mu$ in terms of vector labelings, Kotlov, Lov\'asz, and Vempala \cite{KLV} related some topological properties of a graph to the $\mu$ invariant of its complement: 
for $G$ a simple graph on $n$ vertices, 
(a)~if $G$ is planar then $\mu(cG)\ge n-5$;
(b)~if $G$ is outerplanar then $\mu(cG)\ge n-4$. 
The three authors also conjectured that for any simple graph $G$ of order $n$, $\mu(G)+\mu(cG) \ge n-2$. 
The conjecture is known to hold for planar graphs, graphs of order at most 10, and graphs with $\mu(G)\ge n-6$ \cite{Hog}. 
Theorem \ref{main} validates the conjecture for graphs of order 11:
$G$ or $cG$, say $G$, is IL, so $\mu(G) \ge 5 = 11-6$;
and the conjecture holds for graphs with $\mu(G) \ge n-6$.
\end{remark}

We make a few observations about the graphs listed in Table 1.
\begin{enumerate}
\item There is exactly one non-apex maxnIL graph with 8 vertices, called the J{\o}rgensen graph \cite{J}.
The J{\o}rgensen graph is minor minimal non-apex.
\item Since the complement of each plane triangulation with 9 vertices is non-planar by \cite{BHK},  the complement of each apex maxnIL graph with 10 vertices is non-planar.
Among 50  such apex maxnil graphs, only one has a complement which is IL, with all other complements being nIL.
This complement is the graph $G_9$ in the Petersen family plus one isolated vertex.
\item
It is straightforward to show that
if a maxnIL graph of order $n$ is apex then it has $4n-10$ edges.
But the converse does not hold; 
there are maxnIL graphs of order $n$ with $4n-10$ edges that are not apex.
In Figure~\ref{non-apex} we provide a minimal order example of such a graph.

\end{enumerate}


\begin{figure}[htpb!]
\begin{center}
\begin{picture}(250, 130)
\put(0,0){\includegraphics[width=3.5in]{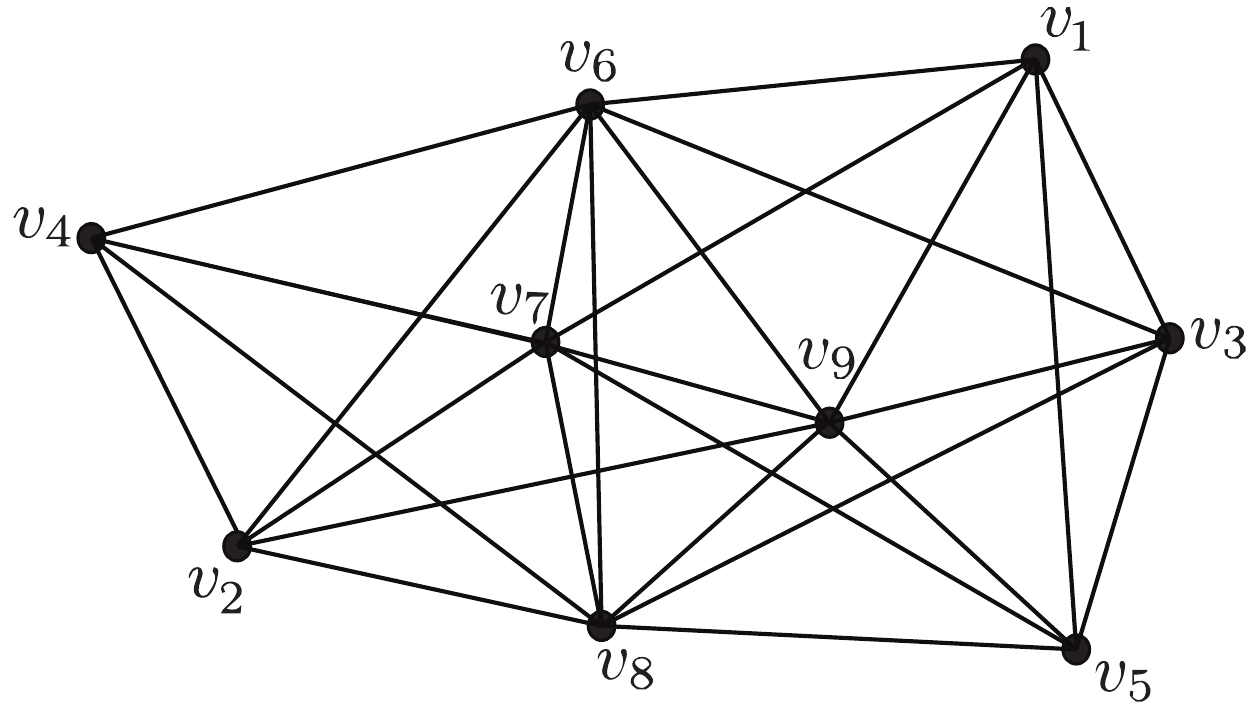}}
\end{picture}
\caption{\small A maxnIL graph with $n=9$ vertices and $4n-10$ edges that is non-apex.}
\label{non-apex}
\end{center}
\end{figure}


\section{The Algorithms Used for the Searches}
\label{CS}
{\it Algorithms for determining if a graph is IL or maxnIL}.
We start with an arbitrary embedding $G \subset S^3$ of the given graph.
Any other embedding $G'$ of the same graph
can be obtained from $G$ by a combination of isotopy
and crossing changes (i.e., allowing edges to ``go through each other'').
A crossing change between a pair of edges
is equivalent to adding a full twist (two half-twists)
between the two edges, as in Figure~\ref{twist}.

\begin{figure}[htpb!]
\begin{center}
\begin{picture}(250, 70)
\put(0,0){\includegraphics[width=3.5in]{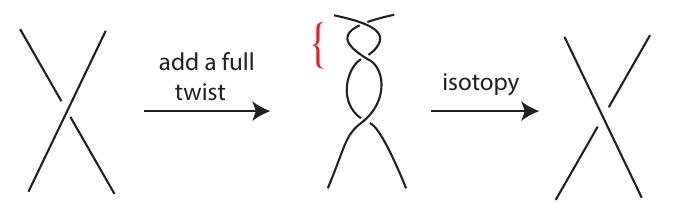}}
\end{picture}
\caption{\small A crossing change between two edges of a graph.}
\label{twist}
\end{center}
\end{figure}

The linking number between any two disjoint cycles in $G'$ can be computed from their corresponding linking number in $G$
by taking into account the number of twists added
between all (disjoint) pairs of edges 
in the two components  of the link in question.

In the algorithm,  for every 2-component link $L$ in the given embedding $G$, we compute its linking number $\lk_G(L)$.
We then assign a variable to every disjoint pair of edges in $G$, representing the number of twists to be added between the two edges in order to obtain a new embedding $G'$.
We write the linking number $\lk_{G'}(L)$ in terms of $\lk_{G}(L)$ and the variables.
Setting all these linking numbers equal to zero gives us a system of linear equations that has an integer solution if and only if there exists an embedding $G'$ of the given graph such that every link in $G'$ has linking number zero.

By the work of Robertson, Seymour, and Thomas~\cite{RST}, a graph has a linkless embedding if and only if it has an embedding with no odd linking numbers.
So, instead of solving the system of linear equations for integer solutions, we work in $\Z_2$ (which is much faster).

Lastly, to determine if a given nIL graph $G$ is maxnIL, we simply check if for every pair of nonadjacent vertices $v, w \in G$ the graph $G + vw$ is IL.
Both of these algorithms were implemented in Mathematica. \\

{\it Searching for $K_6$ minors.}
Recall that $H$ is a minor of $G$ if  deleting and contracting zero or more edges and deleting zero or more vertices in $G$ yields a graph isomorphic to $H$.
However, to check if a connected graph $G$ has a $K_6$ minor, we only need to check if contracting zero or more edges in $G$ yields $K_6$; deleting edges is not necessary since $K_6$ is a complete graph, and deleting vertices is not necessary since $G$ is assumed to be connected.

Now consider a connected graph \(G\) of order \(n > 6.\) Contracting an edge in \(G\) produces a graph with one fewer vertex. So \(G\) has a \(K_6\) minor if and only if there exists a set of \(n-6\) edges in $G$ such that contracting those edges yields a graph isomorphic to \(K_6\).
If $G$ has $m$ edges, there are \({m \choose n-6}\) sets of edges to check.
After contracting the edges in each of these sets, checking if the resulting minor is isomorphic to $K_6$ can be done by simply checking if every vertex has degree~5.
This algorithm was implemented in Python, using the NetworkX library.\\

{\bf Acknowledgement:} This work was made possible in part by a grant of high performance computing resources and technical support from the Alabama Supercomputer Authority.

\bibliographystyle{amsplain}

\end{document}